\def\Gal{\text{Gal}}
\newtheorem{theorem}{Theorem}[section]
\newtheorem{proposition}[theorem]{Proposition}
\newtheorem{lemma}[theorem]{Lemma}
\newtheorem{corollary}{Corollary}[theorem]
\title{Class Group Relations in a Function Field Analogue of ${\mathbb Q}(\zeta_p, \sqrt[p]{n})$}
\author{Steven Reich}
\date{November 2020}
\begin{document}

\maketitle

\section{Introduction}

Let $p$ be an odd prime and $k={\mathbb F}_p(T)$. Let $\lambda$ be a non-zero root of $x^p+Tx=0$, and $\gamma$ be a root of $x^p+Tx=P(T)$, where $P(T) \in k$ but $P(T) \neq Q(T)^p + TQ(T)$ for any $Q(T) \in k$. We have the following lattice of fields\footnote{The edge labels indicate the Galois groups of the corresponding extensions, which will be described in Section \ref{proof}.}:

{\centering 
\begin{tikzpicture}
\matrix (m) [matrix of math nodes,row sep=3em,column sep=0em,minimum width=2em,ampersand replacement=\&]
{
\& L = k(\lambda, \gamma) \& \\
F = k(\gamma) \& \& K = k(\lambda) \\
\& k = {\mathbb F}_p(T) \& \\
};
\path[-]
(m-2-1) edge node [pos=0.4, above] {$\Delta$ \, \,} (m-1-2)
(m-2-3) edge node [pos=0.4, above] {\, \, $G$} (m-1-2)
(m-3-2) edge node {} (m-2-1)
(m-3-2) edge node [right] {$\Omega$} (m-1-2)
(m-3-2) edge node [pos=0.6, below] {\, \, \, $\cong \Delta$} (m-2-3)
;
\end{tikzpicture}

}

Our main objective is to prove the following theorems about $Cl_L^0$, the group of degree-0 divisor classes\footnote{We take the \emph{divisor group} of a function field to mean the free abelian group indexed by its primes. Its quotient by the principal divisors (those which represent an element of the field) is the \emph{divisor class group}. The subgroup of elements with total exponent 0 is then the \emph{degree-0 divisor class group}.} of $L$:

\begin{theorem}\label{main}
$Cl_L^0$ is isomorphic to a $(p-1)$-st power of a finite abelian group.
\end{theorem}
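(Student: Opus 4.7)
The strategy is to realize $Cl_L^0$ as an induced $\mathbb{Z}[\Delta]$-module, where $\Delta = \Gal(L/F) \cong \mathbb{F}_p^*$ has order $p-1$. Concretely, the aim is to produce a finite abelian group $H$ together with a $\mathbb{Z}[\Delta]$-module isomorphism $Cl_L^0 \cong H \otimes_\mathbb{Z} \mathbb{Z}[\Delta]$. Since $\mathbb{Z}[\Delta]$ is free of rank $p-1$ over $\mathbb{Z}$, this immediately yields $Cl_L^0 \cong H^{p-1}$ as abelian groups.

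To set this up, I would first analyze the $\Delta$-action on primes of $L$. Because $L = F(\lambda) = F(\sqrt[p-1]{-T})$ is a cyclic Kummer extension --- the $(p-1)$-st roots of unity already sit in $\mathbb{F}_p^* \subset F$ --- and $p-1$ is invertible in characteristic $p$, ramification in $L/F$ is tame and confined to the primes of $F$ above $T = 0$ and $T = \infty$. At an unramified prime the decomposition group in $\Delta$ is detected by a $(p-1)$-st power residue symbol for $-T$, so each $\Delta$-orbit contributes a free or cleanly-induced summand to $\text{Div}(L)$. In parallel, the eigenvector decomposition $L = \bigoplus_{i=0}^{p-2} F\cdot\lambda^i$ (where $\delta_a(\lambda^i) = a^i \lambda^i$) controls the $\mathbb{Z}[\Delta]$-structure of $L^*$ and hence of the principal divisors inside $\text{Div}(L)$.

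Passing to the quotient equips $Cl_L^0$ with its $\mathbb{Z}[\Delta]$-structure. A clean packaging of the target is via Tate cohomology: showing $\hat{H}^i(\Delta, Cl_L^0) = 0$ for all $i$ forces --- via an $\ell$-Sylow-by-$\ell$-Sylow analysis --- a splitting of $Cl_L^0$ into pieces of the form $(\mathbb{Z}/n)[\Delta]$, each visibly a $(p-1)$-st power. The main obstacle is precisely this cohomological computation: the finitely many ramified primes of $L/F$ and the unit group $\mathcal{O}_L^*$ contribute correction terms in the standard long exact sequence relating $Cl_L^0$, $Cl_F^0$, and the local data at the bad primes, and verifying that these corrections conspire to give the required vanishing is where the hypothesis on $P(T)$ (not of the form $Q^p + TQ$) should enter, ruling out the degenerate case in which $\gamma$ already lies in a subfield and the $\Delta$-action collapses.
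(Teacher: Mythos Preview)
Your target is right but your method has a fatal gap at the prime $p$. You propose to reach $Cl_L^0 \cong H \otimes_\mathbb{Z} \mathbb{Z}[\Delta]$ by establishing $\hat{H}^i(\Delta, Cl_L^0) = 0$ for all $i$, and then inferring that each $\ell$-Sylow piece is a sum of copies of $(\mathbb{Z}/n)[\Delta]$. But cohomological triviality over $\Delta$ does not force a module to be induced. The decisive case is the $p$-Sylow subgroup $M$ of $Cl_L^0$: since $|\Delta| = p-1$ is prime to $p$, \emph{every} finite $p$-group with a $\Delta$-action is automatically cohomologically trivial over $\Delta$, so the vanishing of $\hat{H}^*(\Delta, M)$ carries no information whatsoever. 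The trivial one-dimensional $\mathbb{F}_p[\Delta]$-module already shows that such an $M$ need not be a $(p-1)$-st power. Your $\ell$-Sylow-by-$\ell$-Sylow analysis therefore cannot even begin at $\ell = p$. (Separately, the hypothesis $P(T) \neq Q^p + TQ$ is only there to guarantee $[F:k]=p$; it plays no role in any cohomological cancellation.)

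The paper works along an orthogonal axis: it exploits the $G = \Gal(L/K)$-module structure rather than the $\Delta$-structure. The key arithmetic input is that $K = k(\lambda)$ has genus $0$, so $Cl_K^0 = 0$ and the norm element $N_G$ annihilates $Cl_L^0$; hence $Cl_L^0$ is a module over $\mathbb{Z}[G]/(N_G) \cong \mathbb{Z}[\zeta_p]$, with a compatible twisted $\Delta$-action. For the prime-to-$p$ part an explicit averaging over $\Delta$ gives $M \cong M^\Delta \otimes_\mathbb{Z} \mathbb{Z}[G]/(N_G)$. For the $p$-part, $A = \mathbb{Z}_p[G]/(N_G) \cong \mathbb{Z}_p[\zeta_p]$ is a DVR with uniformizer $\tau-1$; the paper computes the $\Delta$-action on $\hat{H}^{-1}(G, Cl_L^0) = M/(\tau-1)M$ and $\hat{H}^{0}(G, Cl_L^0) = M[\tau-1]$ via local class field theory at the ramified primes (all of which are totally ramified in $K/k$), and then a structure theorem for finite $A$-modules with twisted $\Delta$-action yields $M \cong H \otimes_{\mathbb{Z}_p} A$. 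In both halves the factor $p-1$ arises as the $\mathbb{Z}$-rank of $\mathbb{Z}[\zeta_p]$, not as $|\Delta|$ --- the two coincide numerically, which is why your guess about the shape of the answer was correct even though the proposed mechanism is not.
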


\begin{theorem}\label{main2}
The class numbers of $L$ and $F$ are related via $h_L=h_F^{p-1}$ (and in fact this holds when $p$ is a prime power).
\end{theorem}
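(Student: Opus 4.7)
The plan is to factor the zeta functions of $L$ and $F$ using the representation theory of $\Omega = \Gal(L/k)$, which is the affine group of $\mathbb{F}_p$, and then exploit that $K$ is itself a rational function field. This approach avoids Theorem \ref{main}, and it generalizes verbatim when $p$ is any odd prime power.

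Write $\Omega = G \rtimes \Delta$ with $G$ normal of order $p$ and $\Delta$ of order $p-1$. A direct conjugacy-class count shows that $\Omega$ has exactly $p$ irreducible complex representations: the $p-1$ one-dimensional characters factoring through $\Omega^{\mathrm{ab}} \cong \Delta$, together with a single $(p-1)$-dimensional irreducible $\rho$, realized as $\mathrm{Ind}_G^{\Omega}\psi$ for any nontrivial $\psi \in \widehat{G}$. By Artin's formalism,
\[
\zeta_L(s) \;=\; \prod_{\chi \in \widehat{\Delta}} L(s,\chi,L/k) \cdot L(s,\rho,L/k)^{p-1} \;=\; \zeta_K(s)\cdot L(s,\rho,L/k)^{p-1},
\]
since the one-dimensional characters factor through $\Gal(K/k)$ and reassemble into $\zeta_K$. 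For $F = L^\Delta$ we instead have $\zeta_F(s) = L(s,\mathrm{Ind}_\Delta^{\Omega}\mathbf{1},L/k)$; Mackey (using $\Delta \cap G = 1$ and $\Omega = \Delta G$) gives $\mathrm{Res}_\Delta\rho =$ the regular representation of $\Delta$, so Frobenius reciprocity yields $\mathrm{Ind}_\Delta^{\Omega}\mathbf{1} \cong \mathbf{1}_\Omega \oplus \rho$, whence
\[
\zeta_F(s) \;=\; \zeta_k(s)\cdot L(s,\rho,L/k).
\]

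Passing to the numerator polynomials $P_E(T)$ defined by $\zeta_E(s)=P_E(p^{-s})/((1-p^{-s})(1-p^{1-s}))$ (so that $h_E=P_E(1)$), eliminating $L(s,\rho,L/k)$ between the two displayed identities yields $h_L = h_K \cdot h_F^{p-1}$. Finally, $K = k(\lambda) = \mathbb{F}_p(\lambda)$ is itself a rational function field (since $T = -\lambda^{p-1}$), so $h_K = 1$ and $h_L = h_F^{p-1}$ as required; the prime-power case is identical with the same group $\Omega$, affine now over the larger constant field. The main delicate step is the character-theoretic bookkeeping — confirming $\Omega$'s irreducible decomposition and the identity $\mathrm{Ind}_\Delta^{\Omega}\mathbf{1} \cong \mathbf{1}_\Omega \oplus \rho$ — after which the passage from the zeta factorizations to the class-number identity is routine.
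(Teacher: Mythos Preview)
Your argument is correct and follows essentially the same route as the paper: both derive the zeta-function identity $\zeta_L/\zeta_k=(\zeta_K/\zeta_k)(\zeta_F/\zeta_k)^{q-1}$ from the representation theory of the affine group $\Omega$ and then read off class numbers using $h_K=h_k=1$; you reach the identity via the irreducible decomposition and $\mathrm{Ind}_\Delta^\Omega\mathbf{1}\cong\mathbf{1}\oplus\rho$, while the paper verifies the equivalent permutation-character relation $\chi_L-\chi_k=(\chi_K-\chi_k)+(q-1)(\chi_F-\chi_k)$ by a direct conjugacy-class check. The only other cosmetic difference is that the paper extracts $h$ from Schmidt's residue formula for $\zeta(1-s)$ rather than from the numerator polynomial evaluated at $1$.
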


When the $\ell$-rank of $Cl_F^0$ is 1 for every prime $\ell$ dividing $h_F$ (e.g. when $h_F$ is square-free), these combine to say that $Cl_L^0=(Cl_F^0)^{p-1}$. However, this is not known in general.

In Section \ref{pre}, we motivate these theorems and provide some background on the fields being considered, and the remaining sections are devoted to the proofs.

\section{Preliminaries}\label{pre}

\subsection{Number field antecedents}
Theorem \ref{main} is an analogue of a recent result of Schoof in the number field setting. In \cite{schoof2020ideal}, he proves the following:

\begin{theorem}\label{schoof}
Let $p>2$ be a regular prime and $n \in {\mathbb Z}$ not a $p$-th power. Suppose that all prime divisors $l \neq p$ of $n$ are primitive roots mod $p$. Then the ideal class group $Cl_L$ of $L = {\mathbb Q}(\zeta_p, \sqrt[p]{n})$ and the kernel of the norm map $N_{L/{\mathbb Q}(\zeta_p)}$ fit into the exact sequences
\[ 0 \to V \to \ker(N_{L/{\mathbb Q}(\zeta_p)}) \to A^{p-1} \to 0 \text{ and} \]
\[ 0 \to \ker(N_{L/{\mathbb Q}(\zeta_p)}) \to Cl_L \to Cl_{{\mathbb Q}(\zeta_p)} \to 0, \]
where $A$ is a finite abelian group and $V$ is an ${\mathbb F}_p$-vector space of dimension at most $\left(\frac{p-3}{2}\right)^2$. In particular, if $\# Cl_{{\mathbb Q}(\zeta_p)} = 1$, then $Cl_L / V$ is a $(p-1)$-st power of a finite abelian group.
\end{theorem}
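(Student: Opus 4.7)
The plan is to adapt Schoof's cohomological approach, exploiting the Galois-module structure of $Cl_L$ under $\Omega = \Gal(L/\mathbb{Q}) \cong G \rtimes \Delta$, where $G = \Gal(L/K) \cong \mathbb{Z}/p\mathbb{Z}$ with $K = \mathbb{Q}(\zeta_p)$, and $\Delta = \Gal(K/\mathbb{Q}) \cong \mathbb{F}_p^{\times}$ acts on everything through the quotient $\Omega/G$. Because $|\Delta| = p-1$ is coprime to $p$, every $\mathbb{Z}_p[\Delta]$-module splits canonically over characters $\chi \in \widehat{\Delta}$; this is the structural origin of the $(p-1)$-fold symmetry in the conclusion.

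I would dispatch the second sequence first, as regularity makes it immediate. The composition $Cl_K \xrightarrow{j} Cl_L \xrightarrow{N_{L/K}} Cl_K$ is multiplication by $[L:K]=p$, and since $p \nmid h_K$ by the regularity hypothesis, this composition is an automorphism of $Cl_K$. Hence $N_{L/K}$ is surjective and the second exact sequence is just the definition of its kernel.

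For the first sequence, set $M := \ker(N_{L/K})$ with its induced $\Delta$-action, and decompose $M \otimes \mathbb{Z}_p = \bigoplus_{\chi \in \widehat{\Delta}} e_\chi M$ using the $\Delta$-idempotents. The primitive-root hypothesis forces each prime $\ell \mid n$ to be inert in $K/\mathbb{Q}$ (its Frobenius in $\Delta$ has full order $p-1$) and hence totally ramified in $L/K$; this ramification pattern is $\Delta$-symmetric in a way that matches the cyclotomic action of $\Delta$ on $G$. Via Kummer theory $L=K(\sqrt[p]{n})$, the class of $n$ sits in the trivial $\Delta$-eigenspace of $K^{\times}/K^{\times p}$, so $G$ corresponds to the cyclotomic (Teichmüller) character $\omega$. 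A Chevalley-type ambiguous class number formula, applied eigenspace-by-eigenspace, should then show that for each non-trivial $\chi$ the component $e_\chi M$ is isomorphic to a common finite abelian group $A$, modulo a controlled $\mathbb{F}_p$-linear error; summing the $p-1$ non-trivial components yields a surjection $M \twoheadrightarrow A^{p-1}$ whose kernel is $V$.

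The hard part is the precise bound on $\dim V$. This error measures the failure of the $\chi$-components to be strictly isomorphic, and is naturally a Selmer-type subspace of $H^1(G, E_L)$ cut out by $\Delta$-eigenspaces. Using Dirichlet's unit theorem for $K$ (the free rank of $E_K$ is $\tfrac{p-3}{2}$), together with Kummer duality and the regularity hypothesis (which suppresses the contribution of $Cl_K[p]$), one shows that $V$ embeds into a cup-product/tensor pairing of two $\mathbb{F}_p$-vector spaces each of dimension at most $\tfrac{p-3}{2}$ — roughly, one from the even $\Delta$-eigenspaces of the unit-cohomology and one from the odd side — producing the bound $\dim V \le \left(\tfrac{p-3}{2}\right)^2$. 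Finally, when $\# Cl_K = 1$ the second sequence collapses to $Cl_L \cong M$, so the first sequence directly yields $Cl_L/V \cong A^{p-1}$. The main obstacle is this last step: tracking the exact pieces of Kummer cohomology that survive the $\Delta$-eigenspace decomposition and using the regularity hypothesis decisively enough to obtain the square bound, rather than something weaker.
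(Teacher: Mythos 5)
First, a framing point: the paper does not prove this statement. Theorem \ref{schoof} is quoted from Schoof \cite{schoof2020ideal} purely as motivation, so there is no in-paper proof to compare yours against; what the paper proves are the function-field analogues (Theorems \ref{main} and \ref{main2}), whose Section 3 is a simplified adaptation of Schoof's argument. Measured against that template, two pieces of your outline are sound: the second exact sequence does follow exactly as you say ($N_{L/K}\circ j$ is multiplication by $p$ on $Cl_K$, which is an automorphism since $p\nmid h_K$ by regularity), and the identification of the unit group $E_K$, of free rank $\tfrac{p-3}{2}$, as the source of the $\left(\tfrac{p-3}{2}\right)^2$ bound is the right heuristic (in the function-field setting that contribution vanishes, which is exactly why no $V$ appears in Theorem \ref{main}).

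The genuine gap is in your mechanism for producing $A^{p-1}$. You propose to decompose $M=\ker(N_{L/K})$ into $\Delta$-eigenspaces and to show that the ``$p-1$ non-trivial components'' are pairwise isomorphic; but $\Delta\cong{\mathbb F}_p^\times$ has $p-1$ characters in total, of which only $p-2$ are nontrivial, so this count cannot yield the exponent $p-1$, and there is no reason to discard the trivial eigenspace --- in the paper's Proposition \ref{nonp} the invariants $M^\Delta$ are precisely the building block, not an excluded piece. The exponent $p-1$ actually comes from the $G$-module structure rather than from counting $\Delta$-characters: $M$ is killed by $N_G$, hence is a module over ${\mathbb Z}[G]/(N_G)\cong{\mathbb Z}[\zeta_p]$, which has ${\mathbb Z}$-rank $p-1$, and the structure theorem (Schoof's Prop.~3.1, reproduced as the Lemma in Section 3.2 of this paper) shows that once $\Delta$ acts suitably on $M/(\tau-1)M$ and $M[\tau-1]$, one gets $M\cong H\otimes{\mathbb Z}[\zeta_p]\cong H^{p-1}$ as abelian groups, up to the error $V$. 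Beyond this, the two hardest steps --- the eigenspace-by-eigenspace Chevalley-type computation and the embedding of $V$ into a pairing of two $\tfrac{p-3}{2}$-dimensional spaces --- are asserted rather than argued, as you yourself acknowledge, so the quantitative heart of the theorem is not established by the proposal.
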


Theorem \ref{main2} is inspired by one proved by Honda \cite{honda1971pure}:

\begin{theorem}\label{honda}
Let $F={\mathbb Q}(\sqrt[3]{n})$, and $L={\mathbb Q}(\sqrt[3]{n}, \zeta_3)$ be its normal closure. Then $h_L = h_F^2$ or $h_L = \frac{1}{3} h_F^2$.
\end{theorem}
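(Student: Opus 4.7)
The plan is to combine the analytic class number formula with a unit-index computation. First I would decompose the regular representation of $\Gal(L/\mathbb{Q}) \cong S_3$ into its three irreducible constituents (trivial, sign, and the 2-dimensional standard) and use Artin formalism to obtain
\[ \zeta_L(s)\,\zeta_{\mathbb{Q}}(s) \;=\; \zeta_F(s)^{2}\,\zeta_K(s), \]
where $K=\mathbb{Q}(\zeta_3)$. The conductor-discriminant formula then yields $|d_L| = |d_F|^{2}|d_K|$.

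Taking residues at $s=1$ and applying the class number formula to each side, the powers of $2\pi$ balance for signatures $(r_1,r_2)=(1,1),(0,1),(0,3)$ of $F,K,L$; the discriminants cancel by the identity above; and using $h_K=R_K=1$, $w_L=w_K=6$, $w_F=2$, everything collapses to $h_L R_L = h_F^{2} R_F^{2}$.

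It then remains to compute $R_L/R_F^{2}$. Let $\varepsilon$ be a fundamental unit of $F$ and $\sigma$ a generator of $\Gal(L/K)\cong A_3$. Then $\varepsilon,\sigma\varepsilon\in E_L$ are independent modulo torsion in the rank-$2$ unit group; using the norm relation $\varepsilon\cdot\sigma\varepsilon\cdot\sigma^{2}\varepsilon=N_{F/\mathbb{Q}}(\varepsilon)=\pm1$ and the bijection between archimedean places of $L$ and embeddings of $F$, a direct evaluation of the $2\times 2$ regulator determinant in the log embedding gives $R(\varepsilon,\sigma\varepsilon)=3R_F^{2}$. Setting $Q:=[E_L:\mu_L\langle\varepsilon,\sigma\varepsilon\rangle]$, we obtain $R_L=3R_F^{2}/Q$ and hence
\[ h_L \;=\; \frac{Q\,h_F^{2}}{3}. \]

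The remaining and hardest step is to show $Q\in\{1,3\}$. The subgroup $\mu_L\langle\varepsilon,\sigma\varepsilon\rangle$ is $S_3$-stable (using $\sigma^{2}\varepsilon=\pm(\varepsilon\sigma\varepsilon)^{-1}$ and $\tau\sigma\varepsilon=\sigma^{2}\varepsilon$), so $E_L/\mu_L\langle\varepsilon,\sigma\varepsilon\rangle$ is a finite $\mathbb{Z}[S_3]$-module. Rationally, $E_L\otimes\mathbb{Q}$ is the standard 2-dimensional representation of $S_3$, in which $\langle\varepsilon,\sigma\varepsilon\rangle$ is a scaled copy of the weight lattice $A_2^*$. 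Combining the classification of $\mathbb{Z}[S_3]$-lattices in this representation (the only abstract types are $A_2$ and $A_2^*$) with a Hilbert 90 / norm argument for the cyclic extension $L/K$ (which prevents $\varepsilon$ from being a non-trivial higher-power unit in $L$) constrains the overlattice to differ from $\langle\varepsilon,\sigma\varepsilon\rangle$ by at most the factor $3$ between $A_2^*$ and $A_2$, yielding $Q\in\{1,3\}$ and the two cases $h_L=h_F^{2}/3$ and $h_L=h_F^{2}$ of the theorem. The lattice-classification and unit-power step is the principal obstacle, as the abstract lattice structure alone permits more possibilities ($Q=n^2$ or $3n^2$) and one must argue arithmetically that only $n=1$ arises.
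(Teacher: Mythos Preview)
The paper does not prove this statement at all: Theorem~\ref{honda} is quoted in Section~\ref{pre} purely as motivation, with a citation to Honda's original paper, and the remainder of the article is devoted to the function-field analogues (Theorems~\ref{main} and~\ref{main2}). So there is no ``paper's own proof'' to compare against.

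As for your proposal itself, the overall strategy---Artin/Brauer relation among zeta functions, class-number formula, then a unit-index computation---is the classical route and is essentially what Honda does. One small slip: the Brauer relation for $S_3$ is
\[
\chi_L + 2\chi_k \;=\; 2\chi_F + \chi_K,
\]
so the zeta identity should read $\zeta_L\,\zeta_{\mathbb Q}^{2} = \zeta_F^{2}\,\zeta_K$, with $\zeta_{\mathbb Q}$ squared. This is harmless for the residue computation (since $\mathrm{Res}_{s=1}\zeta_{\mathbb Q}=1$) and for the discriminant relation, so your formula $h_LR_L=h_F^{2}R_F^{2}$ survives. The genuinely incomplete part is exactly the one you flag: bounding $Q=[E_L:\mu_L\langle\varepsilon,\sigma\varepsilon\rangle]$ by $3$. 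Your sketch via $\mathbb Z[S_3]$-lattices in the standard representation is on the right track, but as written it does not yet exclude $Q=n^{2}$ or $3n^{2}$ with $n>1$; the decisive arithmetic input is that $\varepsilon$ is \emph{fundamental} in $F$, together with a careful analysis of what a unit $u\in E_L$ with $u^m\in\langle\varepsilon,\sigma\varepsilon\rangle$ would force. That step needs to be made explicit before the argument is complete.
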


\subsection{Function field analogue of ${\mathbb Q}(\zeta_p)$}

We return now to considering extensions of $k={\mathbb F}_p(T)$. Let $\Lambda$ denote the roots (in $\bar{k}$, an algebraic closure) of $x^p+Tx=0$. Fixing a nonzero root $\lambda$, we see that $\Lambda=\{m\lambda \, | \, m \in {\mathbb F}_p \} \cong {\mathbb F}_p^+$, and that $k(\Lambda)=k(\lambda)$ is a degree $p-1$ cyclic extension of $k$. These definitions should be reminiscent of those for the $p$-th roots of unity $\mu_p = \{ \zeta_p^i \, | \, i \in {\mathbb F}_p \}$, and in fact they are part of a rich theory of `cyclotomic' function fields first developed by Hayes \cite{hayes1974explicit} based on work by Carlitz \cite{carlitz1938class}. For a more thorough discussion of the connection, we refer to \cite{goss1983arithmetic}.

We note two properties which will be important to the proofs of Theorems \ref{main} and \ref{main2}, and which contribute to their comparative simplicity vis-\`a-vis Theorems \ref{schoof} and \ref{honda}. The first is that $k(\Lambda) = k(\lambda)$ has genus 0 (since $T = -\lambda^{p-1}$), and therefore its degree-0 divisor class group is trivial. The second is that there are exactly two primes which ramify in $k(\lambda) / k$, both of which are totally ramified: the prime $(T) = (\lambda)^{p-1}$, and the distinguished infinite prime $(1/T) = (1/\lambda)^{p-1}$. This is fairly easy to see in our case from the discriminant, but also applies to a more general class of such extensions \cite{hayes1974explicit}.

\subsection{Function field analogue of ${\mathbb Q}(\sqrt[p]{n})$}

Now let $\gamma$ denote a root of $x^p+Tx=P(T)$, where $P(T) \in k$ but $P(T) \neq Q(T)^p + TQ(T)$ for any $Q(T) \in k$. Then $\gamma + \lambda$, $\gamma + 2\lambda$, $\dots$, $\gamma + (p-1)\lambda$ are the other roots, and the fields $k(\gamma + i\lambda)$ are conjugate degree-$p$ extensions of $k$ with Galois closure $k(\lambda, \gamma)$. Continuing the analogy of the previous section, this construction parallels that of ${\mathbb Q}(\zeta_p^i\sqrt[p]{n})$ and its Galois closure, ${\mathbb Q}(\zeta_p^i, \sqrt[p]{n})$.

By discriminant considerations, the only primes that possibly ramify in \linebreak $k(\lambda, \gamma) / k(\lambda)$ are those lying above $(T)$ and $(1/T)$. Therefore every prime of $k$ which ramifies in $k(\lambda, \gamma)/k$ is totally ramified in $k(\lambda)/k$.

\section{Proof of Theorem \ref{main}}\label{proof}

As in the introduction, we set $k={\mathbb F}_p(T)$, $K=k(\lambda)$, $F=k(\gamma)$, and $L=k(\lambda, \gamma)$. Define $\Omega = \Gal(L/k)$, $G = \Gal(L/K)$, and $\Delta = \Gal(L/F) \cong \Gal(K/k)$. These groups have the presentations
\begin{itemize}
    \item $G = \langle \tau \, | \, \tau^p=1 \rangle \cong {\mathbb F}_p^+$,
    \item $\Delta = \langle \sigma \, | \, \sigma^{p-1}=1 \rangle \cong {\mathbb F}_p^\times$,
    \item $\Omega = \langle \sigma, \tau \, | \, \sigma^{p-1}=1, \tau^p=1, \sigma \tau \sigma^{-1} = \tau^{\omega(\sigma)} \rangle$,
\end{itemize}

where $\omega: \Delta \to {\mathbb F}_p^\times$ is the cyclotomic character defined by $\sigma(\lambda) = \omega(\sigma) \lambda$. Refer to the field diagram in the introduction for a depiction of these relationships.

Naturally, there is a Galois action of $\Omega$ on $Cl_L^0$. The norm element $N_G = \sum_G \tau$ of $G$ gives a map $Cl_L^0 \to Cl_L^0$ that factors through $Cl_K^0$, which is trivial. Thus $Cl_L^0$ is a module over the group ring ${\mathbb Z}[\Omega]/(N_G)$, or alternately a module over ${\mathbb Z}[G]/(N_G)$ with a twisted action of $\Delta$ (by which we mean that $\Delta$ acts on $Cl_L^0$ in a way that is consistent with the action of $\Delta$ on ${\mathbb Z}[G]/(N_G)$). Now, ${\mathbb Z}[G]/(N_G) \cong {\mathbb Z}[\zeta_p]$ as a $\Delta$-module (because $N_G$ is the $p$-th cyclotomic polynomial evaluated at $\tau$), so we may freely apply standard facts about $\zeta_p$ to $\tau$.\footnote{We opt to keep the notation in terms of $\tau$ rather than $\zeta_p$, to maintain coherence with the function field setting.}

We are now ready to develop the proof of Theorem \ref{main}. We proceed by separately considering the $p$ part and the non-$p$ part of $Cl_L^0$.

\subsection{The non-$p$ part of $Cl_L^0$}

In this section, let $M$ denote the non-$p$ part of the degree-0 divisor class group of $L$. The following proposition describes the $\Delta$-module structure of $M$.

\begin{proposition}\label{nonp}
The map \[ \varphi: M^\Delta \otimes_{\mathbb Z} {\mathbb Z}[G]/(N_G) \to M \] given by $\sum_i m_i \otimes [\tau^i] \mapsto \sum_i \tau^i m_i$ is an isomorphism of $\Delta$-modules. 
\end{proposition}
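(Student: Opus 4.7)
My plan is to recognize Proposition \ref{nonp} as Galois descent in disguise. The ring $R := {\mathbb Z}[G]/(N_G) \cong {\mathbb Z}[\zeta_p]$ is finite étale over ${\mathbb Z}$ away from $p$, with Galois group $\Delta$ acting as described. Since $|M|$ is coprime to $p$, $M$ is automatically an $R[\tfrac{1}{p}]$-module, and the $\Omega$-action endows it with a compatible semi-linear $\Delta$-action (the relation $\sigma\tau = \tau^{\omega(\sigma)}\sigma$ is exactly the semi-linearity condition). Classical faithfully flat descent then yields a canonical isomorphism $M^\Delta \otimes_{{\mathbb Z}[1/p]} R[\tfrac{1}{p}] \xrightarrow{\sim} M$ given by multiplication, which coincides with $\varphi$ after noting that both sides are already $|M|$-torsion (so the tensor product can be taken over ${\mathbb Z}$ with $R$).

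For a concrete write-up I would verify $\Delta$-equivariance, surjectivity, and injectivity by hand. Equivariance is one line: for $m \in M^\Delta$, both $\sigma\varphi(m\otimes\tau^i)$ and $\varphi(\sigma(m\otimes\tau^i))$ reduce to $\tau^{\omega(\sigma)i}m$ using $\sigma m = m$ and the commutation relation. For \emph{surjectivity}, given $m \in M$, the elements $m_j := \sum_{\sigma \in \Delta} \tau^{-\omega(\sigma) j}\sigma(m)$ lie in $M^\Delta$ (verify by reindexing $\sigma'' = \sigma'\sigma$), and expanding $\sum_{j=0}^{p-1}\tau^j m_j$ collapses the inner geometric sums $\sum_j \tau^{j(1-\omega(\sigma))}$, which vanish unless $\sigma = 1$ (where they equal $p$); this yields $pm$, and inverting $p$ on $M^\Delta$ together with the relation $1 = -\sum_{i=1}^{p-1}\tau^i$ in $R$ realizes $m$ as $\varphi$ of an explicit element. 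For \emph{injectivity}, a dependence $\sum_{i=1}^{p-1}\tau^i m_i = 0$ with $m_i \in M^\Delta$ yields the system $\sum_i \tau^{\omega(\sigma) i} m_i = 0$ after applying each $\sigma \in \Delta$; the coefficient matrix $\bigl(\tau^{\omega(\sigma)i}\bigr)_{\sigma,i}$ is essentially Vandermonde in the primitive $p$-th roots of unity, so its determinant is a power of $p$ times a unit of $R$ and is therefore invertible on $M^\Delta$.

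The main obstacle is that $|M|$ may share common factors with $p-1 = |\Delta|$, which blocks the naive averaging idempotent $\tfrac{1}{p-1}\sum_{\sigma\in\Delta}\sigma$ from producing $\Delta$-fixed elements and prevents a straightforward eigenspace decomposition. Both arguments above navigate this obstruction by using only that $p$ itself is invertible on $M$ --- which is precisely the defining property of $M$ as the non-$p$ part of $Cl_L^0$.
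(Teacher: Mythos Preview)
Your proposal is correct and follows essentially the same route as the paper: both arguments verify surjectivity and injectivity by producing explicit combinations that recover $p$ times a given element, exploiting only that $p$ (and not $|\Delta|$) is invertible on $M$. Your elements $m_j = \sum_\sigma \tau^{-\omega(\sigma)j}\sigma(m)$ are exactly the paper's $N_\Delta(\tau^{-j}m)$, and your geometric-sum collapse is the same orthogonality computation the paper performs. The Galois-descent framing is a nice conceptual wrapper that the paper leaves implicit.

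One inaccuracy to fix: the Vandermonde determinant $\det\bigl(\tau^{ai}\bigr)_{a,i=1}^{p-1}$ is \emph{not} literally a power of $p$ times a unit of $R$. It equals a unit times $(\tau-1)^{\binom{p-1}{2}}$, and $\binom{p-1}{2}=(p-1)(p-2)/2$ is never a multiple of $p-1$ for odd $p$. Your conclusion survives, since $(\tau-1)^{p-1}$ is $p$ times a unit, so $\tau-1$ itself acts invertibly on $M$ and hence so does any power of it; equivalently, the determinant is a unit in $R[\tfrac{1}{p}]$, which is all you need. Just rephrase accordingly. The paper avoids this issue by writing down the ``inverse matrix'' explicitly: it applies $\sum_{\sigma}\tau^{-j\omega(\sigma)}(1-\tau^{j\omega(\sigma)})$ to the system and checks directly that this operator picks out $p\,m_j$.
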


\begin{proof}
Suppose first that $\sum_i \tau^i m_i = 0$ (and note that since $N_G$ acts trivially, this sum can be assumed to be over $1 \leq i \leq p-1$). Then $\sum_i \tau^{i\omega(\sigma)} m_i = 0$ for all $\sigma$, and thus for $1 \leq j \leq p-1$,

\begin{align*}
0 &= \sum_{\sigma \in \Delta} \tau^{-j\omega(\sigma)} (1-\tau^{j\omega(\sigma)}) \sum_i \tau^{i\omega(\sigma)} m_i \\
&= \sum_i \sum_{\sigma \in \Delta} (1-\tau^{j\omega(\sigma)}) \tau^{(i-j)\omega(\sigma)} m_i.
\end{align*} 
$\sum_\sigma (1-\tau^{j\omega(\sigma)}) \tau^{(i-j)\omega(\sigma)}$ acts as  $p-1 + (1 - N_G) = p$ for $i=j$, and as 0 for $i \neq j$, so this says $pm_j = 0$, and thus $m_j=0$, for all $j$ (since $M$ has order prime to $p$). Therefore $\varphi$ is injective.

Now suppose $m \in M$. Then for any $i$, $N_\Delta (\tau^i m) = \sum_\sigma \tau^{i\omega(\sigma)} \sigma(m) \in M^\Delta$, and accordingly, 
\begin{align*}
&\sum_{i=1}^{p-1} \tau^{-i\omega(\sigma')}(1-\tau^{i\omega(\sigma')}) \sum_\sigma \tau^{i\omega(\sigma)} \sigma(m) \\
= &\sum_\sigma \sum_{i=1}^{p-1} (1-\tau^{i\omega(\sigma')}) \tau^{i(\omega(\sigma)-\omega(\sigma'))} \sigma(m) \in im(\varphi) \text{ for all } \sigma' \in \Delta.
\end{align*}
$\sum_i (1-\tau^{i\omega(\sigma')}) \tau^{i(\omega(\sigma)-\omega(\sigma'))}$ acts as $p$ for $\sigma' = \sigma$ and as 0 for $\sigma' \neq \sigma$, so this says that each $p\sigma'(m)$, and in particular $pm$, is in $im(\varphi)$. Therefore $\varphi$ is surjective (again using that $M$ has order prime to $p$).
\end{proof}

Ignoring the module structure gives $M \cong (M^\Delta)^{p-1}$ as abelian groups, which settles the non-$p$ part of Theorem \ref{main}.

\subsection{The $p$ part of $Cl_L^0$}

From this section forward, $M$ will denote the $p$ part of the degree-0 divisor class group of $L$. Having only $p$-power torsion allows us to strengthen the ${\mathbb Z}[G]/(N_G)$-module structure previously described to a ${\mathbb Z}_p[G]/(N_G)$-module structure, still with the twisted $\Delta$-action as before. $A = {\mathbb Z}_p[G]/(N_G)$ is a discrete valuation ring, and its maximal ideal is generated by $\tau-1$ (just as $\zeta_p-1$ generates the maximal ideal of ${\mathbb Z}_p[\zeta_p]$). Furthermore, $(\tau-1)^{p-1} = (p)$ as ideals of $A$, which will be key to this section's results. 

As before, $\Delta$ acts on $\tau$, and thus on $\tau-1$, by the cyclotomic character $\omega: \Delta \to {\mathbb F}_p^\times$. We have a filtration of ideals
\[ A \supset (\tau-1)A \supset (\tau-1)^2A \supset \dots \]
with successive quotients ${\mathbb F}_p$, ${\mathbb F}_p(\omega)$, ${\mathbb F}_p(\omega^2)$, $\dots$, where $X(\omega^i)$ denotes that the default action of $\Delta$ on the module $X$ is twisted by the character $\omega^i$.

We now prove two results which characterize the structure of $A$-modules with particularly `nice' $\Delta$-action:

\begin{lemma}
\emph{\cite[Prop.~3.1]{schoof2020ideal}} Let $M'$ be a finite $A$-module with twisted $\Delta$-action. Then $\Delta$ acts trivially on $M'/(\tau-1)M'$ if and only if there exist $n_1, n_2, \dots, n_t \geq 1$ such that
\[ M' \cong \bigoplus_{i=1}^t A/(\tau-1)^{n_i}A. \]
\end{lemma}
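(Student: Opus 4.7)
The forward direction is immediate: if $M'\cong\bigoplus_{i=1}^t A/(\tau-1)^{n_i}A$, each summand contributes one copy of $A/(\tau-1)A\cong\mathbb{F}_p$ to $M'/(\tau-1)M'$, and $\Delta$ acts on this quotient by the trivial character $\omega^0 = 1$.

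For the converse I would induct on $|M'|$, with $M'=0$ as base case. Let $n_1$ be minimal with $(\tau-1)^{n_1}M'=0$ and pick $m\in M'$ with $(\tau-1)^{n_1-1}m\neq 0$. Since $\Delta$ acts trivially on $M'/(\tau-1)M'$, each $\sigma(m)-m$ lies in $(\tau-1)M'$, so the average $\tilde m_1=\tfrac{1}{p-1}\sum_{\sigma\in\Delta}\sigma(m)$ (well-defined because $p-1\in\mathbb{Z}_p^\times$) is $\Delta$-fixed and satisfies $(\tau-1)^{n_1-1}\tilde m_1=(\tau-1)^{n_1-1}m\neq 0$; hence $\mathrm{Ann}(\tilde m_1)=(\tau-1)^{n_1}$. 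The quotient $M'/A\tilde m_1$ is smaller and still satisfies the hypothesis, since $(M'/A\tilde m_1)/(\tau-1)(M'/A\tilde m_1)$ is a $\Delta$-equivariant quotient of $M'/(\tau-1)M'$. By induction it decomposes as $\bigoplus_{i=2}^{t} A/(\tau-1)^{n_i}$ $\Delta$-equivariantly, with each $n_i\le n_1$.

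The remaining work is to lift this decomposition to $M'$. For each generator $e_i$ of $A/(\tau-1)^{n_i}$, take any lift $m_i\in M'$; then $(\tau-1)^{n_i}m_i=f\tilde m_1$ for some $f\in A$, and multiplying by $(\tau-1)^{n_1-n_i}$ forces $f\in((\tau-1)^{n_i})$, so writing $f=(\tau-1)^{n_i}g$ and replacing $m_i$ by $m_i-g\tilde m_1$ gives a lift satisfying $(\tau-1)^{n_i}m_i=0$. Averaging over $\Delta$ produces a $\Delta$-fixed lift $\tilde m_i$; it remains annihilated by $(\tau-1)^{n_i}$ because $\sigma(\tau-1)$ is a unit multiple of $\tau-1$, so $(\tau-1)^{n_i}\sigma(m_i)=0$ for each $\sigma$. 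The resulting $A$-linear, $\Delta$-equivariant map $\bigoplus_i A/(\tau-1)^{n_i}\to M'$, $e_i\mapsto \tilde m_i$, is surjective by Nakayama, and injective because a relation $\sum a_i\tilde m_i=0$ projects to one in $M'/A\tilde m_1$ forcing $a_i\in(\tau-1)^{n_i}$ for $i\ge 2$, leaving $a_1\tilde m_1=0$ and hence $a_1\in(\tau-1)^{n_1}$.

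The main obstacle is coordinating the $\Delta$-averaging with the annihilator requirements at each stage. The key observation that makes the argument work is that $\sigma$ preserves each ideal $((\tau-1)^k)$ (rescaling generators only by units of $A$), so both the filtration $(\tau-1)^k M'$ and the condition of being annihilated by $(\tau-1)^k$ are $\Delta$-stable — allowing a single averaging step to replace any element by a $\Delta$-invariant one without disturbing its order.
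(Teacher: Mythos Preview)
Your argument is correct and complete. It differs from the paper's route in organization: the paper first produces a full set of $\Delta$-invariant $A$-generators in one stroke (using that $H^1(\Delta,(\tau-1)M')=0$ since $|\Delta|$ and $|M'|$ are coprime), builds a surjection $\bigoplus_i A/(\tau-1)^{n_i}\twoheadrightarrow M'$, and then repairs injectivity by an iterative kernel-reduction step. You instead run the classical DVR structure argument: strip off a cyclic summand of maximal length generated by a $\Delta$-invariant element, apply induction to the quotient, and then lift and average the remaining generators back to $M'$. Both approaches hinge on the same averaging trick (your explicit $\frac{1}{p-1}\sum_\sigma$ is exactly the cohomological vanishing the paper quotes), and on the observation that $\sigma$ preserves each ideal $(\tau-1)^k$, so annihilators survive averaging. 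Your route is slightly more elementary and mirrors the usual proof over a PID; the paper's route avoids the splitting step by working with a presentation of $M'$ directly.
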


\begin{proof}
Suppose first that the isomorphism holds. Then $M'/(\tau-1)M'$ is a direct sum of $A/(\tau-1)A \cong {\mathbb F}_p$ terms with trivial $\Delta$ action. 

Conversely, suppose that $\Delta$ acts trivially on $M'/(\tau-1)M'$. Then the map $M'^\Delta \to (M'/(\tau-1)M')^\Delta = M'/(\tau-1)M'$ is surjective (its cokernel is the first $\Delta$-cohomology group of $(\tau-1)M'$, which is trivial because $\Delta$ and $M'$ have comprime orders). This says that there are $\Delta$-invariant elements $v_1, \dots, v_t$ which generate $M'$ over $A$, i.e. that there is a surjective map from $A^t \to M'$ taking 1 in the $i$-th coordinate to $v_i$. By the finiteness of $M'$, this descends to a surjective map
\[ \varphi: \bigoplus_{i=1}^t A/(\tau-1)^{n_i}A \to M'. \]
If $\varphi$ is not injective, there is a nonzero element $x$ in the kernel, and we may assume $x = (x_1, \dots, x_t) \in \bigoplus_i (\tau-1)^{n_i-1}A/(\tau-1)^{n_i}A$. Now, $\Delta$ acts on $x$ by $\omega^m$ for some $m$, but by $\omega^{n_i-1}$ on each of these summands, so $x_i$ is zero unless $n_i-1=m+k_i(p-1)$ for some $k_i$. Reordering if necessary, let the first $s$ coordinates of $x$ be exactly those which are nonzero, and choose $n_1$ to be minimal among $n_1, \dots, n_s$. For $i=1, \dots, s$, define $\mu_i$ such that $x_i = (\tau-1)^m p^{k_i} \mu_i$, and $m_i \in {\mathbb Z}$ such that $m_i \equiv \mu_i \mod (\tau-1)^N$, where $N$ is the maximum of the $n_i$. Notice that $\mu_i$, and thus $m_i$, is a unit in $A$.

We are now able to construct a new map
\[ \varphi': A/(\tau-1)^{n_1-1}A \oplus \bigoplus_{i=2}^t A/(\tau-1)^{n_i}A \to M', \]
which takes the basis vector $e_1 = (1, 0, \dots, 0)$ to $\sum_{i=1}^s m_i p^{k_i-k_1} v_i$, and $e_i$ to $v_i$ for $i \neq 1$. This is well-defined because
\begin{align*}
\varphi'((\tau-1)^m p^{k_1} e_1) &= \sum_{i=1}^s (\tau-1)^m p^{k_i} m_i v_i \\
&= \sum_{i=1}^s (\tau-1)^m p^{k_i} \mu_i v_i = \sum_{i=1}^s x_i v_i = \varphi(x) = 0.
\end{align*}
Furthermore, $\varphi'$ is surjective because, by the surjectivity of $\varphi$, $m_1 v_1 \in im(\varphi')$, and $m_1$ is invertible. If $\varphi'$ is not injective, we repeat this procedure until we reach a map that is, at which point we will have found a direct sum of finite quotients of $A$ which is isomorphic to $M'$.
\end{proof}

\begin{proposition}\label{pstruct}
Let $M'$ be a finite $A$-module with twisted $\Delta$-action, such that $\Delta$ acts trivially on $M'/(\tau-1)M'$ and by $\omega^{-1}$ on $M'[\tau-1]$. Then there exists a finite abelian $p$-group $H$ such that
\[ M \cong H \otimes_{{\mathbb Z}_p} A. \]
\end{proposition}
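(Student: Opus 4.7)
The plan is to use the preceding lemma to reduce to a cyclic decomposition, and then exploit the hypothesis on $M'[\tau-1]$ together with the identity $(\tau-1)^{p-1}A = pA$ to force each cyclic factor to be of the form $A/p^m A$.

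First, since $\Delta$ acts trivially on $M'/(\tau-1)M'$ by hypothesis, I would apply the previous lemma to obtain
\[ M' \cong \bigoplus_{i=1}^t A/(\tau-1)^{n_i}A \]
for some positive integers $n_1,\dots,n_t$. I would then read off the $\Delta$-module structure of $M'[\tau-1]$ from this decomposition: the $(\tau-1)$-torsion of $A/(\tau-1)^{n_i}A$ is the top piece $(\tau-1)^{n_i-1}A/(\tau-1)^{n_i}A$, and by the filtration noted just before the lemma, $\Delta$ acts on this piece through the character $\omega^{n_i-1}$. Thus
\[ M'[\tau-1] \cong \bigoplus_{i=1}^t \mathbb{F}_p(\omega^{n_i-1}). \]

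The hypothesis that $\Delta$ acts on $M'[\tau-1]$ by $\omega^{-1}$ now forces $\omega^{n_i-1} = \omega^{-1}$ for every $i$, i.e. $n_i \equiv 0 \pmod{p-1}$. Writing $n_i = (p-1)m_i$ and invoking the relation $(\tau-1)^{p-1}A = pA$, each cyclic factor becomes
\[ A/(\tau-1)^{n_i}A = A/p^{m_i}A \cong (\mathbb{Z}_p/p^{m_i}\mathbb{Z}_p)\otimes_{\mathbb{Z}_p} A. \]
Setting $H = \bigoplus_{i=1}^t \mathbb{Z}_p/p^{m_i}\mathbb{Z}_p$, a finite abelian $p$-group, and noting that tensoring with $A$ commutes with finite direct sums, yields the desired isomorphism $M' \cong H \otimes_{\mathbb{Z}_p} A$.

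The only real obstacle is the middle step: one has to recognise that the hypothesis on the socle of $M'$ exactly detects the residue class of each $n_i$ modulo $p-1$. This is immediate once the $\Delta$-equivariant filtration of $A$ with successive quotients $\mathbb{F}_p(\omega^i)$ is in hand, but it is what makes the proposition work at all — without the condition on $M'[\tau-1]$, one would merely have an arbitrary direct sum of cyclic $A$-modules, not a $\mathbb{Z}_p$-module base-changed up to $A$.
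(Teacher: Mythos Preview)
Your proof is correct and follows essentially the same route as the paper: apply the preceding lemma to decompose $M'$ as $\bigoplus_i A/(\tau-1)^{n_i}A$, read off that $\Delta$ acts on each summand of $M'[\tau-1]$ by $\omega^{n_i-1}$, and conclude $n_i \equiv 0 \pmod{p-1}$ so that each factor is $A/p^{m_i}A \cong (\mathbb{Z}/p^{m_i}\mathbb{Z})\otimes_{\mathbb{Z}_p}A$. The paper merely presents the argument in the opposite order (single cyclic factor first, then invoke the lemma), but the content is identical.
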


\begin{proof}
Suppose $M' \cong A/(\tau-1)^n A$ for some positive integer $n$. Then $M'[\tau-1]=(\tau-1)^{n-1}A/(\tau-1)^n A \cong {\mathbb F}_p(\omega^{n-1})$. By assumption, this requires $n=(p-1)m$ for some positive integer $m$, whereby $A/(\tau-1)^n A \cong A/p^m A \cong {\mathbb Z}/p^m{\mathbb Z} \otimes_{{\mathbb Z}_p} A$.

By the previous lemma, a general $M'$ satisfying the condition on $M'/(\tau-1)M'$ is a direct sum of such ${\mathbb Z}/p^m{\mathbb Z} \otimes_{{\mathbb Z}_p} A$, proving the proposition.
\end{proof}

It remains to show that this proposition can be applied to (a twist of) $M$. We will achieve this by exploring the Galois cohomology of $Cl_L^0$ and related objects.

\subsection{Galois cohomology of $Cl_L^0$}
In this section, $\hat{H}^i(X)$ is the $i$-th Tate $G$-cohomology group of $X$. We fix the notations
\begin{itemize}
    \item $P_L$, the principal $L$-divisors
    \item $D_L^0$, the $L$-divisors of degree 0
    \item ${\mathbb I}_L^0$, the ideles of total valuation 0
    \item $C_L^0 = {\mathbb I}_L^0 / L^\times$, the idele classes of total valuation 0
    \item $U_L$, the product of the local unit groups $U_{\mathfrak L}$ of $L$.
\end{itemize}

We have the following commutative diagram of $\Omega$-modules in which the rows and columns are exact:

\[\def\arraystretch{1.5}
\begin{array}{ccccccccc}
 & & 1 & & 1 & & 0 & & \\
 & & \downarrow & & \downarrow & & \downarrow & & \\
1 & \to & {\mathbb F}_p^\times & \to & L^\times & \to & P_L & \to & 0 \\
 & & \downarrow & & \downarrow & & \downarrow & & \\
1 & \to & U_L & \to & {\mathbb I}_L^0 & \to & D_L^0 & \to & 0 \\
 & & \downarrow & & \downarrow & & \downarrow & & \\
1 & \to & U_L / {\mathbb F}_p^\times & \to & C_L^0 & \to & Cl_L^0 & \to & 0 \\
 & & \downarrow & & \downarrow & & \downarrow & & \\
 & & 1 & & 1 & & 0 & &
\end{array}
\]

We will make use of various long exact sequences induced in cohomology by the above diagram in order to study $\hat{H}^i(Cl_L^0)$ for $i=-1, 0$. We remark that a $G$-cohomology group of an $\Omega$-module is an ${\mathbb F}_p[\Delta]$-module (because it is killed by $p$ and is $G$-invariant), and any map induced in cohomology by any of the maps in the diagram is $\Delta$-equivariant. Furthermore, since $G$ is a cyclic group, we have $\Delta$-isomorphisms $\hat{H}^i(X) \to \hat{H}^{i-2}(X)(\omega^{-1})$ for every $i \in {\mathbb Z}$ and $\Omega$-module $X$, given by cupping with a generator of $\hat{H}^{-2}({\mathbb Z}) \cong H_1({\mathbb Z}) \cong G \cong {\mathbb Z}/p{\mathbb Z}(\omega)$.

\begin{lemma}
$C_L^0$ and ${\mathbb F}_p^\times$ have trivial Tate $G$-cohomology.
\end{lemma}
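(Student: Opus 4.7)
The lemma splits into two essentially independent claims.

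For ${\mathbb F}_p^\times$, triviality is immediate from coprime orders. Since $|G| = p$ and $|{\mathbb F}_p^\times| = p - 1$ are coprime, multiplication by $p$ is simultaneously zero on any Tate $G$-cohomology of a finite group and invertible on ${\mathbb F}_p^\times$ (hence on each of its cohomology groups). So $\hat{H}^i(G, {\mathbb F}_p^\times) = 0$ for every $i$.

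For $C_L^0$, the plan is to invoke global class field theory for the geometric cyclic degree-$p$ extension $L/K$. The standard inputs are $\hat{H}^{-1}(G, C_L) = 0$ (a form of Hilbert 90 for idele classes) and $\hat{H}^0(G, C_L) \cong G$ via the reciprocity isomorphism. I would then couple these with the short exact sequence $0 \to C_L^0 \to C_L \to \mathbb{Z} \to 0$, in which $G$ acts trivially on $\mathbb{Z}$ because Galois automorphisms preserve the degrees of $L$-primes. Together with the elementary computations $\hat{H}^0(G, \mathbb{Z}) = \mathbb{Z}/p$ and $\hat{H}^{-1}(G, \mathbb{Z}) = 0$, and the 2-periodicity of Tate cohomology for a cyclic group, the resulting long exact sequence should determine all $\hat{H}^i(G, C_L^0)$.

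The main obstacle, as I see it, is pinning down the connecting map $\hat{H}^0(G, C_L) \to \hat{H}^0(G, \mathbb{Z})$ — both copies of $\mathbb{Z}/p$ — and showing it is an isomorphism rather than the zero map. My plan for this is local: at any prime of $K$ lying above $(T)$ or $(1/T)$, the paper has already observed (Section~2.3) that the local extension is totally ramified of degree $p$, so the local reciprocity map is surjective onto $G$. Combining this surjectivity with the local degree pairings should force the global connecting map to be an isomorphism, after which the long exact sequence collapses to give $\hat{H}^{-1}(G, C_L^0) = \hat{H}^0(G, C_L^0) = 0$, and periodicity propagates the vanishing to every $i$. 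The compatibility between the reciprocity and degree normalizations is where the calculation really has to be done carefully; everything else is formal.
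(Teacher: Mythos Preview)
Your overall framework matches the paper's exactly: the coprime-order argument for ${\mathbb F}_p^\times$, the short exact sequence $0 \to C_L^0 \to C_L \to {\mathbb Z} \to 0$, the class-field-theory inputs $\hat H^{-1}(C_L)=0$ and $\hat H^0(C_L)\cong{\mathbb Z}/p$, and periodicity. The only substantive divergence is in the step you flag yourself---showing that the map $\hat H^0(C_L)\to\hat H^0({\mathbb Z})$ is an isomorphism---and there your proposed local input does not work.

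First a minor point: this map is not a connecting map; it is the map induced in cohomology by the degree (total valuation) homomorphism $C_L\to{\mathbb Z}$. Concretely, a class in $\hat H^0(C_L)=C_K/N_G C_L$ represented by an idele $\alpha\in{\mathbb I}_K$ is sent to its total $L$-valuation modulo $p$. Now take a prime ${\mathfrak l}$ of $K$ that is totally ramified in $L/K$, with ${\mathfrak L}$ above it. A uniformizer $\pi_{\mathfrak l}$ has $v_{\mathfrak L}(\pi_{\mathfrak l})=e({\mathfrak L}/{\mathfrak l})=p$, so the idele supported at ${\mathfrak l}$ contributes $0\pmod p$ to the target. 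The fact that the local reciprocity map at ${\mathfrak l}$ surjects onto $G$ is true, but it lives on the reciprocity side of the identification $\hat H^0(C_L)\cong G$ and tells you nothing about the image under the degree map; there is no ``local degree pairing'' that converts one into the other. So the ramified-prime calculation yields zero, not a generator, and cannot establish the isomorphism.

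The paper avoids this by working at an \emph{inert} prime (guaranteed by Chebotarev, since $G$ is cyclic): there a $K$-uniformizer remains a uniformizer in $L$, so the idele supported at that prime has total $L$-valuation $1$, and the map $H^0(C_L)\to H^0({\mathbb Z})$ is visibly surjective. With that in hand, the long exact sequence collapses exactly as you outlined and gives $\hat H^1(C_L^0)=\hat H^0(C_L^0)=0$, whence all $\hat H^i(C_L^0)$ vanish by periodicity.
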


\begin{proof}
The degree map on the idele class group gives rise to the sequence
\[ 0 \to C_L^0 \to C_L \to {\mathbb Z} \to 0. \]
This gives rise to a long exact sequence which includes
\[ H^0(C_L) \to H^0({\mathbb Z}) \to \hat{H}^1(C_L^0) \to \hat{H}^1(C_L), \]
where the first two terms are standard (i.e. non-Tate) cohomology. We have that $H^0(C_L) = C_L^G = C_K$ \cite[p.~2]{artin1968class}. The leftmost map, then, is the degree map on $C_K$ \emph{as a subgroup of} $C_L$. An idele class of $C_K$ which has valuation 1 at an inert prime and valuation 0 elsewhere maintains this property when extended to $C_L$. Since $G$ is cyclic, there are (infinitely many) primes inert in $L/K$ by the Chebotarev density theorem, and thus the map $H^0(C_L) \to H^0({\mathbb Z})$ is surjective. We also have $\hat{H}^1(C_L) = 0$ \cite[p.~19]{artin1968class}, and so $\hat{H}^1(C_L^0) = 0$.

Passing the first exact sequence to Tate cohomology and recognizing that \linebreak $\hat{H}^0(C_L) \cong {\mathbb Z}/p{\mathbb Z}$ \cite[p.~19]{artin1968class}, $\hat{H}^0({\mathbb Z}) \cong {\mathbb Z}/p{\mathbb Z}$, and $\hat{H}^{-1}({\mathbb Z}) \cong \hat{H}^1({\mathbb Z}) = 0$, we have that $\hat{H}^0(C_L^0) = 0$ as well.

As for ${\mathbb F}_p^\times$, it is a finite module of order prime to the order of $G$, and so has trivial Tate $G$-cohomology.
\end{proof}

Applying this lemma to the long exact sequences induced by the leftmost column and bottom row of the diagram immediately gives:

\begin{corollary}\label{iso}
$\hat{H}^{-1}(Cl_L^0) \cong \hat{H}^0(U_L)$ and $\hat{H}^{0}(Cl_L^0) \cong \hat{H}^1(U_L)$.
\end{corollary}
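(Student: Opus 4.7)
The plan is to chase the two short exact sequences identified by the author, namely the leftmost column $1 \to \mathbb{F}_p^\times \to U_L \to U_L/\mathbb{F}_p^\times \to 1$ and the bottom row $1 \to U_L/\mathbb{F}_p^\times \to C_L^0 \to Cl_L^0 \to 0$ of the big diagram. Each produces a long exact sequence in Tate $G$-cohomology, and the preceding lemma kills two of the three columns of entries, leaving the desired isomorphisms as boundary maps.

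More concretely, I would first pass the column sequence to Tate cohomology. Since $\hat{H}^i(\mathbb{F}_p^\times) = 0$ for all $i$ by the lemma, the long exact sequence collapses to give $\Delta$-equivariant isomorphisms $\hat{H}^i(U_L) \cong \hat{H}^i(U_L/\mathbb{F}_p^\times)$ for every $i \in {\mathbb Z}$. Next I would apply the same procedure to the bottom-row sequence; since $\hat{H}^i(C_L^0) = 0$ for all $i$ by the lemma, the connecting homomorphisms yield isomorphisms $\hat{H}^{i-1}(Cl_L^0) \cong \hat{H}^i(U_L/\mathbb{F}_p^\times)$ for every $i$. Composing the two chains of isomorphisms gives $\hat{H}^{i-1}(Cl_L^0) \cong \hat{H}^i(U_L)$, and specializing to $i=0$ and $i=1$ yields the two statements of the corollary.

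There is no real obstacle here: the argument is a mechanical diagram/long-exact-sequence chase once the lemma has eliminated the obstructing cohomology. The only minor points to check are that the isomorphisms produced are automatically $\Delta$-equivariant (as already noted in the excerpt, any map induced in cohomology by an $\Omega$-equivariant map of $\Omega$-modules is $\Delta$-equivariant) and that the lemma's vanishing statement applies in all degrees, which it does since Tate cohomology of a module of order coprime to $|G|$ (for $\mathbb{F}_p^\times$) and of $C_L^0$ (as established) both vanish identically.
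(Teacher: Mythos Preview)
Your proposal is correct and is exactly the argument the paper has in mind: it simply says the corollary follows ``immediately'' from applying the lemma to the long exact sequences of the leftmost column and bottom row, and you have spelled out precisely that chase. The only point worth making explicit is that $\hat{H}^{-1}(C_L^0)=0$ is needed as well, which follows from the lemma together with the 2-periodicity of Tate cohomology for the cyclic group $G$.
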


\begin{lemma}\label{action}
$\Delta$ acts trivially on $\hat{H}^1(U_L)$ and $\hat{H}^2(U_L)$.
\end{lemma}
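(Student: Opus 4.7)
The plan is to pass to the local decomposition of $U_L$, eliminate the places that do not ramify in $L/K$, and read off the $\Delta$-action on the surviving local cohomology using local class field theory together with the periodicity isomorphism from the excerpt.

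As a $G$-module, $U_L$ factors as $\prod_{\mathfrak p}\prod_{\mathfrak L \mid \mathfrak p} U_{\mathfrak L}$, with $\mathfrak p$ ranging over primes of $K$, and Shapiro's lemma gives $\hat H^i\bigl(G,\prod_{\mathfrak L \mid \mathfrak p} U_{\mathfrak L}\bigr) \cong \hat H^i(G_{\mathfrak L}, U_{\mathfrak L})$ for any $\mathfrak L \mid \mathfrak p$. If $\mathfrak p$ splits in $L/K$ the decomposition group is trivial; if $\mathfrak p$ is inert then $L_{\mathfrak L}/K_{\mathfrak p}$ is unramified cyclic and the local unit group is cohomologically trivial (via the filtration by higher units and additive/multiplicative Hilbert~90 over the residue field). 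Hence only primes that ramify in $L/K$ contribute. By the preliminaries such primes lie above $(T)$ or $(1/T)$, and because both are totally ramified in $K/k$ they lift uniquely to $\mathfrak p_T = (\lambda)$ and $\mathfrak p_\infty = (1/\lambda)$; these lifts are $\Delta$-fixed, and total ramification of $L/K$ at either of them forces the prime of $L$ above to also be $\Delta$-fixed. Thus $\hat H^i(U_L)$ splits as a direct sum of at most two summands, each of which is a genuine $\Delta$-module (no $\Delta$-permutation of summands).

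It therefore suffices to analyze one such place. Let $E = L_{\mathfrak L}$ and $F' = K_{\mathfrak p}$, so $E/F'$ is totally ramified cyclic of degree $p$. The Tate long exact sequence for $0 \to U_E \to E^\times \to {\mathbb Z} \to 0$ together with Hilbert~90 yields
\[ 0 \to \hat H^0(U_E) \to \hat H^0(E^\times) \to \hat H^0({\mathbb Z}) \to \hat H^1(U_E) \to 0. \]
A uniformizer $\pi_E$ of $E$ satisfies $v_{F'}(N_{E/F'}\pi_E) = 1$, so the valuation map $F'^\times/N E^\times \to {\mathbb Z}/p$ vanishes, i.e.\ the middle arrow above is zero. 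Local reciprocity identifies $\hat H^0(E^\times) = F'^\times/N E^\times$ with $G$, and this identification is $\Delta$-equivariant because reciprocity intertwines the action of automorphisms on the base field with conjugation on the Galois group. Since $\Delta$ acts on $G$ by $\sigma\tau\sigma^{-1} = \tau^{\omega(\sigma)}$, we obtain $\hat H^0(U_E) \cong {\mathbb F}_p(\omega)$ and $\hat H^1(U_E) \cong \hat H^0({\mathbb Z}) \cong {\mathbb F}_p$ with trivial $\Delta$-action. Applying the periodicity $\hat H^2(X) \cong \hat H^0(X)(\omega^{-1})$ from the excerpt then gives $\hat H^2(U_E) \cong {\mathbb F}_p(\omega)(\omega^{-1}) = {\mathbb F}_p$, again trivial.

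The main delicacy I expect is tracking $\Delta$-equivariance through Shapiro's lemma and the reciprocity isomorphism; both become clean precisely because $\mathfrak p_T$ and $\mathfrak p_\infty$ are $\Delta$-fixed, a consequence of total ramification in $K/k$ (itself a reflection of the genus-zero structure of $K$). With these compatibilities in hand, every remaining step is a standard local-fields computation.
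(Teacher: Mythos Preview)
Your argument is correct, and the reduction to the ramified local summands via Shapiro's lemma is the same as the paper's. The local analysis, however, proceeds along a genuinely different line. The paper treats $\hat H^1$ and $\hat H^2$ separately by inflation--restriction for the tower $\Omega_{\mathfrak L}\supset G_{\mathfrak L}$ with quotient $\Delta_{\mathfrak l}$: for $i=1$ it compares the orders $e_{{\mathfrak L}/\ell}$, $e_{{\mathfrak l}/\ell}$, $e_{{\mathfrak L}/{\mathfrak l}}$ of $\hat H^1$ of the three unit groups to force $\hat H^1(G_{\mathfrak L},U_{\mathfrak L})^{\Delta}=\hat H^1(G_{\mathfrak L},U_{\mathfrak L})$; for $i=2$ it runs the same order count on $\hat H^2$ of the full local multiplicative groups (using Hilbert~90 to extend inflation--restriction) and then injects $\hat H^2(U_{\mathfrak L})\hookrightarrow\hat H^2(L_{\mathfrak L}^\times)$. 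You instead work directly with the local valuation sequence $0\to U_E\to E^\times\to\mathbb Z\to 0$, observe that the connecting map $\hat H^0(E^\times)\to\hat H^0(\mathbb Z)$ vanishes in the totally ramified case, and then read off the $\Delta$-character on each piece explicitly via the $\Delta$-equivariant reciprocity isomorphism $F'^\times/NE^\times\cong G\cong\mathbb F_p(\omega)$; $\hat H^2$ then falls out of periodicity. Your route yields slightly more information (the actual characters $\hat H^0(U_E)\cong\mathbb F_p(\omega)$ and $\hat H^1(U_E)\cong\mathbb F_p$) and avoids a second inflation--restriction computation; the paper's route is agnostic about whether the prime of $L$ is $\Omega$-fixed and sidesteps any appeal to functoriality of the reciprocity map. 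Both are short and clean; the only point you should make fully explicit is that ramification in the degree-$p$ extension $L/K$ is automatically total, so that the surviving local module $U_{\mathfrak L}$ genuinely carries an $\Omega$-action and the periodicity isomorphism applies to it.
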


\begin{proof}
We write $\ell$ for a prime of $k$, ${\mathfrak l}$ for a prime of $K$ above $\ell$, and  ${\mathfrak L}$ for a prime of $L$ above ${\mathfrak l}$. We have decomposition groups $\Omega_{\mathfrak L} = D({\mathfrak L}/\ell)$, $G_{\mathfrak L} = D({\mathfrak L}/{\mathfrak l})$, and $\Delta_{\mathfrak l} = D({\mathfrak l}/\ell)$. For each $i$, $\hat{H}^i(U_L)$ can be expressed as a product of local cohomology groups:

\begin{align*}
\hat{H}^i(U_L) &= \hat{H}^i(\prod_{{\mathfrak L} \text{ of } L} U_{\mathfrak L}) \\
&= \hat{H}^i(\prod_{{\mathfrak l} \text{ of } K} \bigoplus_{{\mathfrak L}|{\mathfrak l}} U_{\mathfrak L}) \\
&= \bigoplus_{\ell \text{ ram in } L} \bigoplus_{{\mathfrak l}|\ell} \hat{H}^i(\bigoplus_{{\mathfrak L}|{\mathfrak l}} U_{\mathfrak L}) \\
&= \bigoplus_{\ell \text{ ram in } L} \bigoplus_{{\mathfrak l}|\ell} \hat{H}^i(G_{\mathfrak L}, U_{\mathfrak L}),
\end{align*}
with the last equality by applying Shapiro's Lemma to $\bigoplus_{{\mathfrak L}|{\mathfrak l}} U_{\mathfrak L} = \text{Ind}_{G_{\mathfrak L}}^G U_{\mathfrak L}$. Furthermore, we saw in Section \ref{pre} that any prime that ramifies in $L$ ramifies totally in $K$, which means that $\Delta_{\mathfrak l} = \Delta$ and any action it has on $\hat{H}^1(U_L)$ is on a single summand $\hat{H}^i(G_{\mathfrak L}, U_{\mathfrak L})$. Thus for $i=1,2$, it is sufficient to show that $\Delta$ acts trivially on $\hat{H}^i(G_{\mathfrak L}, U_{\mathfrak L})$.

We look first at $i=1$. Since $G$ and $\Delta$ have coprime orders, the inflation-restriction sequence
\[ 0 \to \hat{H}^1(\Delta_{\mathfrak l}, U_{\mathfrak l}) \to \hat{H}^1(\Omega_{\mathfrak L}, U_{\mathfrak L}) \to \hat{H}^1(G_{\mathfrak L}, U_{\mathfrak L})^{\Delta_{\mathfrak l}} \to 0 \]
is exact. Now, from local class field theory, $\hat{H}^1(G_{\mathfrak L}, U_{\mathfrak L})$ is cyclic of order equal to the ramification index $e_{{\mathfrak L}/{\mathfrak l}}$ \cite[p.~9]{artin1968class}, and likewise $\hat{H}^1(\Omega_{\mathfrak L}, U_{\mathfrak L}) \cong {\mathbb Z}/e_{{\mathfrak L}/\ell}{\mathbb Z}$ and $\hat{H}^1(\Delta_{\mathfrak l}, U_{\mathfrak l}) \cong {\mathbb Z}/e_{{\mathfrak l}/\ell}{\mathbb Z}$. This forces $\hat{H}^1(G_{\mathfrak L}, U_{\mathfrak L})^{\Delta_{\mathfrak l}} = \hat{H}^1(G_{\mathfrak L}, U_{\mathfrak L})$, and so the action of $\Delta=\Delta_{\mathfrak l}$ on $\hat{H}^1(U_L)$ is trivial.

Next we take $i=2$. Again by the coprime orders of $G$ and $\Delta$, and also using that $\hat{H}^1(G_{\mathfrak L}, L_{\mathfrak L}) = 0$ by Hilbert Theorem 90, the sequence
\[ 0 \to \hat{H}^2(\Delta_{\mathfrak l}, K_{\mathfrak l}) \to \hat{H}^2(\Omega_{\mathfrak L}, L_{\mathfrak L}) \to \hat{H}^2(G_{\mathfrak L}, L_{\mathfrak L})^{\Delta_{\mathfrak l}} \to 0 \]
is exact. But local class field theory gives us that these cohomology groups are dual to the decomposition groups that define them \cite[p.~9]{artin1968class}, and so by order considerations, we must have $\hat{H}^2(G_{\mathfrak L}, L_{\mathfrak L})^{\Delta_{\mathfrak l}} = \hat{H}^2(G_{\mathfrak L}, L_{\mathfrak L})$. Since the inclusion-induced map $\hat{H}^2(G_{\mathfrak L}, U_{\mathfrak L}) \to \hat{H}^2(G_{\mathfrak L}, L_{\mathfrak L})$ is injective (its kernel is $\hat{H}^1(G_{\mathfrak L}, {\mathbb Z})$, which is trivial), we conclude that $\hat{H}^2(U_L)$ is $\Delta$-invariant as well.
\end{proof}

We are now ready to connect the cohomological theory back to $M$, the $p$-part of $Cl_L^0$.

\begin{corollary}
$\Delta$ acts trivially on $M[\tau-1]$ and via $\omega$ on $M/(\tau-1)M$.
\end{corollary}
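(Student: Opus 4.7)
The plan is to express both $M[\tau-1]$ and $M/(\tau-1)M$ as Tate $G$-cohomology groups, then read off the $\Delta$-action using Corollary~\ref{iso}, Lemma~\ref{action}, and the two-periodicity isomorphism recorded earlier in this section.

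First, because $N_G$ annihilates $Cl_L^0$ and $G = \langle \tau \rangle$ is cyclic of order $p$, the Tate cohomology groups of $Cl_L^0$ take the simple forms
\[ \hat{H}^{-1}(Cl_L^0) = Cl_L^0/(\tau-1)Cl_L^0, \qquad \hat{H}^0(Cl_L^0) = Cl_L^0[\tau-1]. \]
The non-$p$ part of $Cl_L^0$ is a direct summand as an $\Omega$-module and, having order coprime to $|G|=p$, has trivial Tate $G$-cohomology. These identifications therefore restrict to $\Delta$-equivariant isomorphisms $M/(\tau-1)M \cong \hat{H}^{-1}(Cl_L^0)$ and $M[\tau-1] \cong \hat{H}^0(Cl_L^0)$.

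Next, Corollary~\ref{iso} translates these into $\Delta$-isomorphisms $M/(\tau-1)M \cong \hat{H}^0(U_L)$ and $M[\tau-1] \cong \hat{H}^1(U_L)$. Lemma~\ref{action} says the latter carries trivial $\Delta$-action, giving the first half of the corollary immediately. For $M/(\tau-1)M$ I would apply the periodicity isomorphism $\hat{H}^i(X) \cong \hat{H}^{i-2}(X)(\omega^{-1})$ to $X = U_L$ with $i=2$, rewritten as $\hat{H}^0(U_L) \cong \hat{H}^2(U_L)(\omega)$. Since Lemma~\ref{action} also tells us that $\hat{H}^2(U_L)$ has trivial $\Delta$-action, its $\omega$-twist is precisely the action by $\omega$, giving the second half.

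The only delicate point is bookkeeping: one must verify that the shift from degree $0$ up to degree $2$ introduces a factor of $\omega$ rather than $\omega^{-1}$. This is fully determined by the fact that cupping with the chosen generator of $\hat{H}^{-2}({\mathbb Z}) \cong G \cong {\mathbb Z}/p{\mathbb Z}(\omega)$ transforms by $\omega$ under $\Delta$, which was already recorded when the periodicity was introduced. With that twist direction pinned down, the argument is just a concatenation of the listed isomorphisms and presents no further obstacle.
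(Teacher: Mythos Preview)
Your proof is correct and follows exactly the paper's approach: identify $M[\tau-1]$ and $M/(\tau-1)M$ with $\hat{H}^0(Cl_L^0)$ and $\hat{H}^{-1}(Cl_L^0)$ via the vanishing of $N_G$, invoke Corollary~\ref{iso} to pass to $\hat{H}^1(U_L)$ and $\hat{H}^0(U_L)$, apply the periodicity twist $\hat{H}^0(U_L)\cong\hat{H}^2(U_L)(\omega)$, and read off the $\Delta$-action from Lemma~\ref{action}. The only difference is that you spell out in more detail why one may replace $Cl_L^0$ by its $p$-part $M$ and why the twist goes in the $\omega$ (rather than $\omega^{-1}$) direction, both of which the paper leaves implicit.
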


\begin{proof}
By Corollary \ref{iso} and the fact that $N_G$ kills $M$, we have
\[ M[\tau-1] \cong \hat{H}^{0}(Cl_L^0) \cong \hat{H}^1(U_L) \text{ and} \]
\[ M/(\tau-1)M \cong \hat{H}^{-1}(Cl_L^0) \cong \hat{H}^0(U_L) \cong \hat{H}^2(U_L)(\omega), \]
which have the claimed actions by Lemma \ref{action}.
\end{proof}

Finally, we can prove the $p$ part of our result:

\begin{proposition}
$M$ is a $(p-1)$-st power of some finite abelian $p$-group.
\end{proposition}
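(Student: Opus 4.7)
The plan is to deduce this almost immediately from the machinery already assembled. Proposition \ref{pstruct} applies to modules on which $\Delta$ acts trivially on $M'/(\tau-1)M'$ and by $\omega^{-1}$ on $M'[\tau-1]$, whereas the corollary just established gives the reverse situation for $M$: trivial action on $M[\tau-1]$ and action by $\omega$ on $M/(\tau-1)M$. The natural fix is to twist.

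First, I would set $M' = M(\omega^{-1})$, i.e.\ the same underlying $A$-module with $\Delta$-action precomposed with $\omega^{-1}$. Because $\tau-1 \in A$ is $\Delta$-equivariant under the untwisted action and the twist multiplies every $\Delta$-action by the same character, we have $M'/(\tau-1)M' = (M/(\tau-1)M)(\omega^{-1})$ and $M'[\tau-1] = M[\tau-1](\omega^{-1})$. Using the corollary, $\Delta$ then acts on $M'/(\tau-1)M'$ by $\omega \cdot \omega^{-1} = 1$ and on $M'[\tau-1]$ by $1 \cdot \omega^{-1} = \omega^{-1}$, exactly the hypotheses of Proposition \ref{pstruct}.

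Applying that proposition produces a finite abelian $p$-group $H$ with $M' \cong H \otimes_{{\mathbb Z}_p} A$. Forgetting the $\Delta$-action (which is all we need for the statement), $M$ and $M'$ have the same underlying abelian group, and $A = {\mathbb Z}_p[G]/(N_G)$ is free of rank $p-1$ as a ${\mathbb Z}_p$-module, so $H \otimes_{{\mathbb Z}_p} A \cong H^{p-1}$ as abelian groups. This gives $M \cong H^{p-1}$, as desired.

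There is no serious obstacle; the heavy lifting was done in the structure lemma and the cohomological computation of the $\Delta$-action on $M[\tau-1]$ and $M/(\tau-1)M$. The only thing to watch is that the twist by $\omega^{-1}$ really does swap the two relevant characters into the form demanded by Proposition \ref{pstruct}, which is just the elementary observation above.
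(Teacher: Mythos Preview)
Your proposal is correct and follows essentially the same approach as the paper: twist $M$ by $\omega^{-1}$ to match the hypotheses of Proposition~\ref{pstruct}, then forget the $\Delta$-action and use that $A$ is free of rank $p-1$ over ${\mathbb Z}_p$. Your write-up actually spells out the last step ($H \otimes_{{\mathbb Z}_p} A \cong H^{p-1}$) a bit more explicitly than the paper does.
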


\begin{proof}
We consider the twist $M' = M(\omega^{-1})$. The previous result says that $\Delta$ acts trivially on $M'/(\tau-1)M'$ and by $\omega^{-1}$ on $M'[\tau-1]$. Thus Proposition \ref{pstruct} may be applied to $M'$. As abelian groups, $M \cong M'$, so we are done.
\end{proof}

Combining this with Proposition \ref{nonp}, we have that the $p$ part and non-$p$ part of $Cl_L^0$ are each the $(p-1)$-st power of an abelian group, and so the proof of Theorem \ref{main} is complete.

\section{Proof of Theorem \ref{main2}}\label{proof2}

\subsection{Character and zeta function relations}

Henceforth let $q$ be a power of an odd prime. The Galois groups $G$, $\Delta$, and $\Omega$ may still be defined as in the beginning of Section \ref{main}, with the caveat that $G$ is no longer a cyclic group when $q$ is not prime. Instead, we have an isomorphism $\nu: G \to {\mathbb F}_q$, where $\nu(\tau)$ is the element of ${\mathbb F}_q$ such that $\tau(\gamma)=\gamma + \nu(\tau) \lambda$.

Now, $\Omega$ can be conveniently realized as the matrix group
$$ \left\{ \left. \begin{pmatrix} a & x \\ 0 & 1 \end{pmatrix} \hspace{.2cm} \right| \hspace{.2cm} a,x \in {\mathbb F}_q, a \neq 0 \right\}, $$
with $\sigma \leftrightarrow \begin{pmatrix} \omega(\sigma) & 0 \\ 0 & 1 \end{pmatrix}$ for $\sigma \in \Delta$ and $\tau \leftrightarrow \begin{pmatrix} 1 & \nu(\tau) \\ 0 & 1 \end{pmatrix}$ for $\tau \in G$.
Thus the elements with $a=1$ are identified with the elements of $G$, and those with $x=0$ with the elements of $\Delta$. 

We are interested in four characters of $\Omega$ which we will show fit an arithmetic relation. These are:
\begin{itemize}
    \item $\chi_L$, for the regular representation (permutation representation on $\Omega$)

    \item $\chi_K$, for the permutation representation on $\Omega / G$
    
    \item $\chi_F$, for the permutation representation on $\Omega / \Delta$
    
    \item $\chi_k$, for the trivial representation (permutation representation on $\Omega / \Omega)$.
\end{itemize}

\begin{proposition}
$$ \chi_L - \chi_k = \chi_K - \chi_k + (q-1)(\chi_F - \chi_k). $$
\end{proposition}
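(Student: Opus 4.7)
The plan is to verify the claimed identity pointwise, by evaluating both virtual characters at an arbitrary element $g \in \Omega$ and exploiting the fact that each of $\chi_L, \chi_K, \chi_F, \chi_k$ is the character of a permutation representation, hence counts fixed points. Using the matrix realization of $\Omega$ given in the paper, elements of $\Omega$ fall naturally into three classes: the identity, elements of $G \setminus \{1\}$, and elements of $\Omega \setminus G$; the identity will then be checked in each.

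Before the case analysis I would record two structural facts about how $G$ and $\Delta$ sit inside $\Omega$. Since $G$ is the kernel of the projection $\Omega \to \Delta$, it is normal. For $\Delta$, a one-line matrix computation gives
\[ \tau \sigma \tau^{-1} = \begin{pmatrix} a & y(1-a) \\ 0 & 1 \end{pmatrix} \]
when $\sigma = \begin{pmatrix} a & 0 \\ 0 & 1 \end{pmatrix}$ and $\tau = \begin{pmatrix} 1 & y \\ 0 & 1 \end{pmatrix}$, which lies in $\Delta$ (for $a \neq 1$) only if $y = 0$. Hence $N_\Omega(\Delta) = \Delta$, so $\Delta$ has exactly $q$ conjugates in $\Omega$, and their $q(q-2)$ non-identity elements are pairwise disjoint. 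Together with the $q-1$ non-identity elements of $G$, this accounts for all $q(q-1) - 1$ non-identity elements of $\Omega$, so every such element lies either in $G$ or in exactly one conjugate of $\Delta$.

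Next I would read off the four character values on each class. We have $\chi_L(g) = q(q-1)$ if $g = 1$ and $0$ otherwise, and $\chi_k(g) = 1$. Because $G$ is normal, $x^{-1} g x \in G$ for all $x$ or for no $x$ according as $g \in G$ or not, so $\chi_K(g) = q - 1$ on $G$ and $\chi_K(g) = 0$ on $\Omega \setminus G$. For $\chi_F$, I would use the formula $|\Delta| \, \chi_F(g) = |\{x \in \Omega : x^{-1} g x \in \Delta\}|$: when $g$ lies in a unique conjugate $x_0 \Delta x_0^{-1}$, the set of conjugating $x$ is the single coset $x_0 N_\Omega(\Delta) = x_0 \Delta$, contributing $|\Delta|$ elements. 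This yields $\chi_F(1) = q$, $\chi_F(g) = 0$ for $g \in G \setminus \{1\}$, and $\chi_F(g) = 1$ for $g \in \Omega \setminus G$.

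Finally I would substitute these values into both sides of the proposed equation and verify equality in each of the three cases. This reduces to three elementary identities in $q$; for instance, at $g = 1$ the check is $q(q-1) - 1 = (q-1) - 1 + (q-1)(q-1)$, and at $g \in G \setminus \{1\}$ it is $-1 = (q-1) - 1 + (q-1)(0-1)$. No genuine obstacle arises: the only non-routine ingredient is the computation of $N_\Omega(\Delta)$, which is immediate from the matrix realization, and everything else is bookkeeping.
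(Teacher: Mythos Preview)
Your proposal is correct and follows essentially the same strategy as the paper: compute each of $\chi_L,\chi_K,\chi_F,\chi_k$ on the three natural classes (identity, $G\setminus\{1\}$, $\Omega\setminus G$) and verify the identity arithmetically. The only difference is cosmetic: the paper obtains $\chi_F$ by directly solving $x=y(1-a)$ to count fixed cosets of $\Omega/\Delta$, whereas you reach the same values via $N_\Omega(\Delta)=\Delta$ and the resulting partition of $\Omega\setminus\{1\}$ into $G\setminus\{1\}$ and the non-identity elements of the $q$ conjugates of $\Delta$---the same equation $y(1-a)$ underlies both computations.
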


\begin{proof}
We know of course that $\chi_k$ takes the value 1 on every element of $\Omega$, and that $\chi_L$ takes $|\Omega|=q(q-1)$ on the identity and 0 elsewhere.

Now, $\begin{pmatrix} a & x \\ 0 & 1 \end{pmatrix} = \begin{pmatrix} 1 & x \\ 0 & 1 \end{pmatrix} \begin{pmatrix} a & 0 \\ 0 & 1 \end{pmatrix} = \begin{pmatrix} a & 0 \\ 0 & 1 \end{pmatrix} \begin{pmatrix} 1 & a^{-1}x \\ 0 & 1 \end{pmatrix}$. This says that each coset of $\Omega / \Delta$ can be represented by a unique element of $G$, and each coset of $\Omega / G$ by a unique element of $\Delta$.

We have $\begin{pmatrix} a & x \\ 0 & 1 \end{pmatrix} \begin{pmatrix} b & 0 \\ 0 & 1 \end{pmatrix} = \begin{pmatrix} ab & 0 \\ 0 & 1 \end{pmatrix} \begin{pmatrix} 1 & (ab)^{-1}x \\ 0 & 1 \end{pmatrix}$, so an element of $\Omega$ fixes a coset of $\Omega / G$ if and only if $a=1$. This says that $\chi_K \begin{pmatrix} a & x \\ 0 & 1 \end{pmatrix} = |\Omega / G| = q-1$ for $a=1$, and 0 for $a \neq 1$. 

On the other hand, $\begin{pmatrix} a & x \\ 0 & 1 \end{pmatrix} \begin{pmatrix} 1 & y \\ 0 & 1 \end{pmatrix} = \begin{pmatrix} 1 & ax+y \\ 0 & 1 \end{pmatrix} \begin{pmatrix} a & 0 \\ 0 & 1 \end{pmatrix}$, so an element of $\Omega$ fixes a coset of $\Omega / \Delta$ if and only if $x = y(1-a)$. This means that $\chi_F \begin{pmatrix} a & x \\ 0 & 1 \end{pmatrix} = |\Omega / \Delta| = q$ for $a=1, x=0$, 0 for $a=1, x \neq 0$, and 1 for $a \neq 1$.

Using the values ascertained above, the relation holds for each element $\begin{pmatrix} a & x \\ 0 & 1 \end{pmatrix}$ of $\Omega$ as follows:
\begin{itemize}
    \item For $a=1, x=0$: $q(q-1) - 1 = (q-1) - 1 + (q-1)(q-1)$.
    \item For $a=1, x \neq 0$: $0 - 1 = (q-1) - 1 + (q-1)(0 - 1)$.
    \item For $a \neq 1$: $0 - 1 = 0 - 1 + (q-1)(1 - 1)$.
\end{itemize}
\end{proof}

This arithmetic relation between characters gives rise to a corresponding multiplicative relation between L-functions, and thus zeta functions \cite{serre1965zeta}:

\begin{corollary}\label{zeta-rel}
Let $\zeta_*$ denote the zeta function for the field $*$. Then
$$ \frac{\zeta_L}{\zeta_k} = \frac{\zeta_K}{\zeta_k} \cdot \left( \frac{\zeta_F}{\zeta_k} \right)^{q-1}. $$
\end{corollary}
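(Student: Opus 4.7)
The plan is to invoke the standard dictionary between virtual characters of $\Omega$ and Artin L-functions, and then transport the additive identity of the preceding proposition across this dictionary to obtain a multiplicative identity of zeta functions.

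More specifically, for each intermediate field $E$ between $k$ and $L$, let $H = \Gal(L/E) \leq \Omega$. Then the permutation character of $\Omega$ on $\Omega/H$ is precisely the character of $\mathrm{Ind}_H^\Omega \mathbf{1}_H$, and by the inductive property of Artin L-functions one has
\[ L(s, \chi_E, L/k) \;=\; L(s, \mathrm{Ind}_H^\Omega \mathbf{1}_H, L/k) \;=\; L(s, \mathbf{1}_H, L/E) \;=\; \zeta_E(s). \]
Applying this with $H = \{1\}, G, \Delta, \Omega$ identifies $\chi_L, \chi_K, \chi_F, \chi_k$ with $\zeta_L, \zeta_K, \zeta_F, \zeta_k$ respectively.

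Next I would use the fact that $L(s,\chi_1+\chi_2) = L(s,\chi_1)L(s,\chi_2)$ and $L(s, n\chi) = L(s,\chi)^n$ (with the convention that $L(s,-\chi) = L(s,\chi)^{-1}$ as a meromorphic function). This turns the additive relation
\[ \chi_L - \chi_k \;=\; (\chi_K - \chi_k) + (q-1)(\chi_F - \chi_k) \]
of the preceding proposition directly into
\[ \frac{\zeta_L}{\zeta_k} \;=\; \frac{\zeta_K}{\zeta_k} \cdot \left(\frac{\zeta_F}{\zeta_k}\right)^{q-1}, \]
as desired.

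The main obstacle is simply verifying that Serre's formalism in \cite{serre1965zeta} applies in our function field setting and with our chosen normalization of zeta functions (in particular, that the infinite prime of $k$ is handled consistently on both sides so that no spurious Euler factors appear); this should be immediate from the construction of $L$-functions as a product of local Euler factors indexed by primes of the base, together with the multiplicativity of Euler factors under induction of representations.
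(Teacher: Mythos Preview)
Your proposal is correct and follows exactly the approach the paper intends: the paper simply cites Serre~\cite{serre1965zeta} for the passage from a character relation to a zeta function relation, and you have spelled out precisely that passage via $\zeta_E = L(s,\mathrm{Ind}_H^\Omega \mathbf{1}_H,L/k)$ and additivity of Artin $L$-functions. There is no substantive difference in method.
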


\subsection{Residues of zeta functions}\label{residues}

Schmidt \cite{schmidt1931analytische} gives the residue formula
$$ \lim_{s \to 1} (s-1) \zeta(s) = \frac{q^{1-g}h}{(q-1)\log q} $$
and the functional equation
$$ \zeta(1-s) = q^{(g-1)(2s-1)}\zeta(s) $$
for the zeta function of a function field in positive characteristic, where $g$ and $h$ denote the genus and class number, respectively\footnote{See Roquette \cite{roquette2001class} for an English reference summarizing these results, but note a typo in the numerator of the residue formula (reversing the sign of the exponent).}. These combine to give a formula for the residue of $\zeta(1-s)$ at 1:
$$ \lim_{s \to 1} (s-1)\zeta(1-s) = \lim_{s \to 1} (s-1) q^{(g-1)(2s-1)} \zeta(s) = \frac{h}{(q-1)\log q}. $$

As $K$ and $k$ have trivial class group, applying this equation to the relation in Corollary \ref{zeta-rel} gives
$h_L = h_F^{q-1}$, completing the proof of Theorem \ref{main2}.

\clearpage
\bibliographystyle{plain}
\bibliography{refs}

\end{document}